\documentclass{amsart}
\usepackage{graphicx}
\usepackage{amsmath}
\usepackage{amssymb}
\usepackage{amsthm}
\usepackage{enumitem}
\usepackage{tikz-cd}

\newtheorem{theorem}{Theorem}[section]
\newtheorem{proposition}[theorem]{Proposition}
\newtheorem{lemma}[theorem]{Lemma}
\newtheorem{example}[theorem]{Example}
\newtheorem{corollary}[theorem]{Corollary}
\newtheorem{hypothesis}[theorem]{Hypothesis}

\begin{document}
\author{Sebastian Gwizdek} 
\address{Faculty of Applied Mathematics, AGH University of Science and Technology,
al. Mickiewicza 30, 30-059, Krak\'ow, Poland}
\email{gwizdek@agh.edu.pl}
\title{Isometric direct limits of bidual Banach spaces}
\keywords{direct limit, inverse limit, Banach space, Banach algebra, uniform algebra, Corona theorem, Arens Product, Gleason part, representing  measure}
\subjclass[2020]{32A38, 32A65, 32A70, 46A13, 46J10, 46J15, 46M15, 46M40}

\begin{abstract}
 Sequences of $n$-th order bidual Banach spaces, called tower systems and their direct and inverse limits are considered.  Motivated by recent applications in corona problem, we introduce two functors: \textrm{Dir} and \textrm{Inv} assigning to Banach spaces (and to  bounded linear operators) some new Banach spaces and operators. Of particular interest is  the enormous  "tower space" built over the space of continuous functions. We  prove that  the action of \textrm{Dir} preserves direct sum decompositions. This functor preserves also  spectra of  operators, their Fredholmness and  compactness properties. An application of these functors to the problem of location of supports of representing measures for function algebras is outlined in the last section.
\end{abstract}
\maketitle


\section{Introduction}

Direct (or \emph{inductive}) limits were used with success  in many branches of mathematics. From  theory of distributions to quite recent works \cite{AW}, \cite{WF} to name a few, such limits were studied in various categories. Here we present a construction of two functors in $\normalfont\textbf{Ban}_1$, the category of Banach spaces with linear contractions as morphisms.   The first functor assigns to a given Banach space a direct limit of the appropriate tower system and to a bounded linear mapping a bounded operator acting between corresponding direct limits. The second functor produces inverse limits of such systems. In the case of uniform Banach algebras the first functor proved   useful in studying corona problem  for a special class of strongly starlike strictly pseudoconvex domains \cite{KR}.   This problem has a long history dating back to its 1941 formulation by Shizuo Kakutani and its solution  in the case of the unit disc in 1962 by Lennart Carleson. The problem remained open for regular domains in higher dimensional case. The recent solution in \cite{KR} established corona theorem for strictly pseudoconvex domains which are also strongly starlike. In \cite{SG} the starlikeness condition has even been eliminated. One of the key  results  in \cite{KR} concerning supports of representing measures was obtained after taking  direct limit of a sequence of consecutive biduals of a uniform algebra. This construction suggested the introduction of our functors in a broad framework of Banach spaces.

The paper is organized as follows. In  Section 2 we   construct the functors \textrm{Dir} and \textrm{Inv}, which turn out to be adjoint in the sense described in Theorem 2.9. 

The action of both functors preserves  direct sum decompsitions, as shown in Theorem 2.6 and Corollary 2.10. In Theorems 2.11 and 2.12 we show that these functors preserve spectra of  bounded operators and the property of being a Fredholm operator, respectively.   The final theorem of Section 2. shows the preservation both under  \textrm{Dir}  and under \textrm{Inv} of   compactness of operators under an additional hypothesis. Namely we assume that the underlying Banach space possesses the approximation property (which requires compact operators to be  norm limits of   finite rank operators). 

  Section 3.  contains an application of these functors to a recent result of \cite{KR} on supports of representing measures for certain uniform algebras.

\section{Tower Functors}

Whenever we speak of direct or inverse limits of Banach spaces, we mean limits in the category $\normalfont\textbf{Ban}_1$. This  category consists of Banach spaces as objects and contractive (i.e. of norm less than or equal to 1) linear mappings as morphisms. It is well known that each direct (resp. inverse) system in $\normalfont\textbf{Ban}_1$ has a direct (resp. inverse) limit (see \cite{Se}, \S 11.8.2).

  Let $\left\{X_i\colon i\in I \right\}$ be a family of objects of $\normalfont\textbf{Ban}_1$ indexed by  a directed set $\langle I,\le\rangle$ and assume that $f_{ij}\colon X_i\to X_j$ are morphisms for all $i\le j$ such that
\begin{enumerate}[label={\textup{(\roman*)}}, widest=iii, leftmargin=*]
    \item $f_{ii}$ is the identity of $X_i$,
    \item $f_{ik}=f_{jk}\circ f_{ij}$ for all $i\le j\le k$.
\end{enumerate}
Then the pair $\langle X_i,f_{ij}\rangle$ is called a \textit{direct system} over $I$.    Here we will consider only \emph{isometric direct systems}, which means that the all the maps $f_{ij}$ are isometries and $I=\mathbb N$ will be the set of natural numbers.

On the disjoint union $\bigsqcup_{i\in \mathbb N} X_i:= \bigcup_i(X_i\times \{i\})$ we define the equivalence relation $(x_i,i)\sim (x_j,j)$, if for $k=\max\{i,j\}$ we have $f_{ik}(x_i)=f_{jk}(x_j)$. (Here $x_i\in X_i$ and $x_j\in X_j$.) In particular, $(x_i,i)\sim (f_{ij}(x_i),j)$ for all $i\le j$. Sometimes we shall write $x_i\sim x_j$ instead of $(x_i,i)\sim (x_j,j)$ to simplify the notation. On the set $\bigsqcup_{i\in I} X_i\big/_\sim$ we consider a standard quotient vector space linear structure. If our system is isometric, the norm of   $[(x_i,i)]\in\bigsqcup_{i\in I} X_i\big/_\sim$ is defined by $\Vert [(x_i,i)]\Vert :=\Vert x_i\Vert$ and it does not depend on the choice of representative of the equivalence class.

The \textit{direct limit} of the isometric direct system $\langle X_i,f_{ij}\rangle$, denoted by $\displaystyle\lim_{\longrightarrow} X_i$, is the completion in the norm of $\bigsqcup_{i\in I} X_i\big/_\sim$. The canonical mappings $f_{i,\infty}\colon X_i\to\displaystyle\lim_{\longrightarrow} X_i$ sending each element to its equivalence class are isometric morphisms in $\normalfont\textbf{Ban}_1$ under the above operations.

 For a Banach space $X$ let $X^{*}$ denote its dual. We define $X_0:=X$,\dots, $X_{n+1}:=X_{n}^{**}$ for $n=0,1,2,\ldots$. The family $\left\{X_n\colon n\in\mathbb{N}\right\}$ of second duals of Banach space $X$ will be called a \textit{tower system}. For any "base space" $X$ we denote by $\kappa_{n,n+1}$ the canonical embedding of $X_n$ into its bidual $X_{n+1}$. For $n\le m$ we define $\kappa_{n,m}\colon X_n\to X_m$ by
\begin{equation*}
\kappa_{n,m}:=\kappa_{m-1,m}\circ \kappa_{m-2,m-1}\circ\ldots\circ \kappa_{n, n+1}, \ \textrm{if} \ n<m
\end{equation*}
and $\kappa_{n,n}$ is defined to be the identity mapping. Since each $\kappa_{n,m}$ is an isometry we obtain an isometric direct system $\langle X_n,\kappa_{n,m}\rangle$.

Given $Y$, a second Banach space, denote   by $Y_n$ the related tower system, with canonical embeddings  denoted also by $\kappa_{n,m}$ and $\kappa_{n,\infty}$.   For  a bounded linear operator $T\in B(X,Y)$ we denote by $T^*$ its adjoint, i.e. the mapping $T^*: Y^*\ni \varphi \mapsto \varphi\circ T \in X^*$. Define $\mathcal{J}_0(T):=T$, $\mathcal{J}_{n+1}(T):=\mathcal{J}_{n}(T)^{**}$ for $n=0,1,2,\ldots$, so  that each $\mathcal{J}_{n}(T)\in B(X_n, Y_n)$ with $\|\mathcal{J}_{n}(T)\|=\|T\|$. Define $\displaystyle\lim_{\longrightarrow}\mathcal{J}_{n}(T)\colon\lim_{\longrightarrow} X_n\to\lim_{\longrightarrow} Y_n$ first on   $\bigsqcup_{i\in I} X_i\big/_\sim$ by
\begin{equation}
\lim_{\longrightarrow} \mathcal{J}_{n}(T) \left([(x_n,n)]\right):=[(\mathcal{J}_{n}(T)(x_n),n)] \ \textrm{for} \ x_n\in X_n .
\end{equation}
and then extend it by continuity to
the whole space (denoting this extension by the same symbol $\displaystyle\lim_{\longrightarrow}\mathcal{J}_{n}(T)$).

Finally, we define the functor $\textrm{Dir}\colon\normalfont\textbf{Ban}\to\normalfont\textbf{Ban}$ as follows
\begin{equation*}
\textrm{Dir}\colon X\mapsto\lim_{\longrightarrow} X_n,
\end{equation*}
\begin{equation*}
\textrm{Dir}\colon T\mapsto\lim_{\longrightarrow} \mathcal{J}_{n}(T).
\end{equation*}

The following lemma shows that the mapping $\displaystyle\lim_{\longrightarrow} \mathcal{J}_{n}(T)$ is well defined.
\begin{lemma}
If $x_n\in X_n$ for some $n\ge 0$ then
\begin{equation*}
(\mathcal{J}_{n+1}(T)\circ\kappa_{n,n+1})(x_n)=(\kappa_{n,n+1}\circ \mathcal{J}_{n}(T))(x_n).
\end{equation*}
\end{lemma}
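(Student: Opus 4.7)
The plan is to reduce the identity to the standard naturality property of the canonical embedding into the bidual, namely that for any bounded linear operator $T\colon E\to F$ between Banach spaces one has $T^{**}\circ \kappa_{E}=\kappa_{F}\circ T$, where $\kappa_{E}\colon E\to E^{**}$ and $\kappa_{F}\colon F\to F^{**}$ denote the canonical embeddings. Applying this with $T=\mathcal{J}_n(f)\colon X_n\to Y_n$, $E=X_n$ and $F=Y_n$, and recalling $\mathcal{J}_{n+1}(f):=\mathcal{J}_n(f)^{**}$, yields exactly the claimed equality. The restriction $n\ge 1$ in the statement is not essential for this argument; it merely reflects the intended application.

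First I would verify the auxiliary identity $T^{**}\circ \kappa_{E}=\kappa_{F}\circ T$ by direct evaluation. Fix $x\in E$ and $\varphi\in F^{*}$. By definition of the adjoint, $T^{*}\varphi=\varphi\circ T\in E^{*}$. Then
\begin{equation*}
(T^{**}\kappa_{E}(x))(\varphi)=\kappa_{E}(x)(T^{*}\varphi)=(T^{*}\varphi)(x)=\varphi(Tx)=\kappa_{F}(Tx)(\varphi).
\end{equation*}
Since $\varphi\in F^{*}$ was arbitrary, $T^{**}\kappa_{E}(x)=\kappa_{F}(Tx)$ in $F^{**}$, and since $x\in E$ was arbitrary, the operator identity follows.

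Next I would apply this with $T=\mathcal{J}_n(f)$, noting that the mapping $\kappa_{n,n+1}$ in the paper coincides, at the level $n$, with the canonical embedding $\kappa_{X_n}\colon X_n\to X_n^{**}=X_{n+1}$ on the domain side and with $\kappa_{Y_n}\colon Y_n\to Y_n^{**}=Y_{n+1}$ on the codomain side. Substituting into the identity and using the definition $\mathcal{J}_{n+1}(f)=\mathcal{J}_n(f)^{**}$ gives
\begin{equation*}
\mathcal{J}_{n+1}(f)\circ\kappa_{n,n+1}=\mathcal{J}_n(f)^{**}\circ\kappa_{X_n}=\kappa_{Y_n}\circ\mathcal{J}_n(f)=\kappa_{n,n+1}\circ\mathcal{J}_n(f),
\end{equation*}
as required.

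There is no real obstacle here: the only point that needs care is the notational collision in which the symbol $\kappa_{n,n+1}$ is used for the canonical embedding of both $X_n$ and $Y_n$ into their biduals, so one must keep track of which space is meant on each side of the displayed formula. Once this is made explicit, the lemma is a direct consequence of the naturality of the biduality embedding.
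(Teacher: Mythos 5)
Your proof is correct and follows essentially the same route as the paper: both arguments test the two sides against an arbitrary functional in $Y_n^{*}$ and unwind the definitions of the adjoint and the canonical embedding; you merely package this as the general naturality identity $T^{**}\circ\kappa_{E}=\kappa_{F}\circ T$ before specializing to $T=\mathcal{J}_n(f)$. Your remark about the notational overload of $\kappa_{n,n+1}$ (used for the embeddings of both $X_n$ and $Y_n$) is a fair and useful clarification, but it does not change the substance of the argument.
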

\begin{proof}
For any $\gamma\in Y_n^*$ we have
\begin{equation*}
\begin{split}
\langle (\kappa_{n,n+1}\circ \mathcal{J}_{n}(T))(x_n), \gamma \rangle = \langle \gamma, \mathcal{J}_{n}(T)(x_n) \rangle = \langle \mathcal{J}_{n}(T)^{*}(\gamma), x_n\rangle = \\
=\langle \kappa_{n,n+1}(x_n), \mathcal{J}_{n}(T)^{*}(\gamma) \rangle = \langle (\mathcal{J}_{n+1}(T)\circ\kappa_{n,n+1})(x_n), \gamma \rangle.
\end{split}
\end{equation*}
\end{proof}
\begin{proposition}
 $\normalfont\textrm{Dir}$ is a covariant functor from $\normalfont\textbf{Ban}$ to $\normalfont\textbf{Ban}$.
\end{proposition}
\begin{proof}
First we check that $\displaystyle{ \lim_{\longrightarrow} }\mathcal{J}_{n}(T)$ is well defined. If $[(x_n,n)]= [(x_{n+1},n+1)]$, i.e. $x_n\sim x_{n+1}$, then $x_{n+1}=\kappa_{n,n+1}(x_n)$.  From Lemma 3.1 we obtain the equality
\begin{equation*}
\mathcal{J}_{n+1}(T)(x_{n+1})=(\mathcal{J}_{n+1}(T)\circ\kappa_{n,n+1})(x_n)=(\kappa_{n,n+1}\circ \mathcal{J}_{n}(T) )(x_n).
\end{equation*}
This implies $\mathcal{J}_{n+1}(T)(x_{n+1})\sim \mathcal{J}_{n}(T)(x_n)$. For arbitrary $n\le m$, if $x_n\sim x_m$ then $\kappa_{n,m-1}(x_n)\sim x_m$. Hence $\mathcal{J}_{m}(T)(x_m)\sim \mathcal{J}_{m-1}(T)(\kappa_{n,m-1}(x_n))$ from which we obtain the equality
\begin{equation*}
\mathcal{J}_{m}(T)(x_{m})=\kappa_{m-1,m}(\mathcal{J}_{m-1}(T)(\kappa_{n,m-1}(x_n))).
\end{equation*}
Now $\kappa_{n,m-1}(x_n)\sim\kappa_{n,m-2}(x_n)$ so that
\begin{equation*}
\mathcal{J}_{m-1}(T)(\kappa_{n,m-1}(x_n))\sim \mathcal{J}_{m-2}(T)(\kappa_{n,m-2}(x_n)).
\end{equation*}
Hence
\begin{equation*}
\mathcal{J}_{m}(T)(x_{m})=(\kappa_{m-1,m}\circ\kappa_{m-2,m-1})(\mathcal{J}_{m-2}(T)(\kappa_{n,m-2}(x_n))).
\end{equation*}
Repeating the above reasoning yields
\begin{equation*}
\begin{split}
\mathcal{J}_{m}(T)(x_{m})=(\kappa_{m-1,m}\circ\ldots\circ\kappa_{n+1,n+2})(\mathcal{J}_{n+1}(T)(\kappa_{n,n+1}(x_n)))= \\
=(\kappa_{m-1,m}\circ\ldots\circ\kappa_{n,n+1})(\mathcal{J}_{n}(T)(x_n))=\kappa_{n,m}(\mathcal{J}_{n}(T)(x_n)).
\end{split}
\end{equation*}
 Hence we obtain that $\mathcal{J}_{n}(T)(x_{n})\sim \mathcal{J}_{m}(T)(x_m)$.

Take any $T\in B(X,Y)$ and $S \in B(Y,Z)$.  For $x_n\in X_n$ we have
\begin{equation*}
\begin{split}
\lim_{\longrightarrow} (\mathcal{J}_{n}(S)\circ \mathcal{J}_{n}(T)) \left([(x_n,n)]\right)=[(\mathcal{J}_{n}(S)(\mathcal{J}_{n}(T)(x_n)),n)]= \\
=\lim_{\longrightarrow} \mathcal{J}_{n}(S) \left([(\mathcal{J}_{n}(T)(x_n),n)]\right)=\lim_{\longrightarrow} \mathcal{J}_{n}(S) \left(\lim_{\longrightarrow}\mathcal{J}_{n}(T)([(x_n,n)])\right).
\end{split}
\end{equation*}
Hence $\textrm{Dir}(S\circ T)=\textrm{Dir}(S)\circ \textrm{Dir}(T)$.

Clearly $\textrm{Dir}(T)$ is  bounded and linear. 

Finally, the functor $\textrm{Dir}$ preserves the identity mapping (by obvious calculations).

\end{proof}
One can easily notice that the mapping $\textrm{Dir}\colon X\mapsto\displaystyle\lim_{\longrightarrow} X_n$ in general is not injective. Indeed, for any Banach space $X$ we have $\textrm{Dir}(X)=\textrm{Dir}(X^{**})$ up to isomorphism. In the case when the Banach space $X$ is reflexive we even have the equality $\textrm{Dir}(X)=X$. The following example shows that $\textrm{Dir}(X)$  can be small even for non-reflexivs $X$.


\begin{example}
\normalfont
Let $\mathfrak{J}$ denote the \textit{James space}. The space $\mathfrak{J}$ is a separable Banach space of codimension one in its bi-dual, i.e., $\dim\mathfrak{J}^{**}\big/\kappa_{0,1}(\mathfrak{J})=1$. Actually, $\mathfrak{J}$ is isometrically isomorphic to $\mathfrak{J}^{**}$. More information on this space can be found in \cite{M}. Since the subspace $ \kappa_{0,1}(\mathfrak{J})$ is complemented in $\mathfrak{J}^{**}$,    for some $e_1\in\mathfrak{J}^{**},\dots, e_n\in \mathfrak J_n$  the tower system $\left\{\mathfrak{J}_n\right\}$   of $\mathfrak{J}$ satisfies up to an isomorphism   the equalities
\begin{equation*}
\mathfrak{J}_1=\mathfrak{J}^{**}=\mathfrak{J}\oplus\mathbb{C}e_1,
\end{equation*}
\begin{equation*} \mathfrak{J}_2=\mathfrak{J}_1^{**}=\mathfrak{J}_1\oplus\mathbb{C}e_2=\mathfrak{J}\oplus\mathbb{C}e_1\oplus\mathbb{C}e_2,
\end{equation*}
\begin{equation*}
\mathfrak{J}_n=\mathfrak{J}\oplus\mathbb{C}e_1\oplus\ldots\oplus\mathbb{C}e_n.
\end{equation*}
Taking the direct limit we obtain  
\begin{equation*}
\textrm{Dir}(\mathfrak{J})=\mathfrak{J}\oplus\mathfrak{R},
\end{equation*}
where $\mathfrak{R}$ denotes the completion of linear span of vectors $\left\{e_k\colon k=1,\ldots, n\right\}$ in which the norms of vectors $\lambda_1e_1+\ldots +\lambda_n e_n$, $\lambda_k\in\mathbb{C}$, are calculated in the space $\mathfrak{J}_n$, $n\ge 1$. Hence $\textrm{Dir}(\mathfrak{J})$ is a separable space.
\end{example}

In the second example the tower space will be large, far from  separable.

\begin{example}
\normalfont
Let $K$ be a compact, infinite Hausdorff space with $\mathcal{F}_1$ --a maximal singular family of probabilistic regular Borel measures on $K$ (which can be arbitrarily chosen). Here singularity means that $\mu\perp\nu$ for any $\mu\neq \nu, \mu,\nu\in \mathcal F$. Let $\left\{\mathcal{C}_n\right\}$ denote the tower system of $C(K)$. Define $U_{\mathcal{F}_1}$ to be the disjoint topological union of the spectra $\Phi_\mu$ of the Banach algebras $L^{\infty}(K,\mu)$, ($\Phi_\mu$ are the sets of all non-zero linear and multiplicative functionals on $L^{\infty}(K,\mu)$).  Then each $\Phi_\mu$ is a compact and open subspace of $U_{\mathcal{F}_1}$ and by 
 \cite[Theorem 5.4.4]{D} we have the equality
\begin{equation*}
    \mathcal{C}_1=C(K)^{**}=C(\beta U_{\mathcal{F}_1}),
\end{equation*}
where $\beta$ denotes the Stone-\v{C}ech compactification. Also for $n\ge 2$ we obtain 
\begin{equation*}
    \mathcal{C}_n=C(\beta U_{\mathcal{F}_n}).
\end{equation*}
Here $\mathcal{F}_n$ is a maximal singular family of probabilistic regular borel measures on $\beta U_{\mathcal{F}_{n-1}}$ and containing $\kappa_{n-1,n}(\mathcal{F}_{n-1})$. The mapping $\kappa_{n-1,n}$ is an isometry, hence from the singularity relation $\mu\perp\nu$ it follows that $\kappa_{n-1,n}(\mu)\perp\kappa_{n-1,n}(\nu)$. We use the fact that $\mu\perp\nu$ if and only if $\Vert\mu\pm\nu\Vert=\Vert\mu\Vert +\Vert\nu\Vert$. Hence $\kappa_{n-1,n}(\mathcal{F}_{n-1})$ is a family of singular measures. Taking the direct limit we get the equality
\begin{equation*}
    \textrm{Dir}(C(K))=C\left(\beta \left(\bigsqcup U_{\mathcal{F}_n}\right)\right),
\end{equation*}
where $\bigsqcup U_{\mathcal{F}_n}$ denotes the topological disjoint union of the spaces $U_{\mathcal{F}_n}$ in which each $\Phi_\mu$ is a compact and open set for every $\mu\in\mathcal{F}_n$, $n\in\mathbb{N}$.
\end{example}

Our next theorem will show that the action of functor $\textrm{Dir}$ preserves finite direct sum decompositions. But first we need the following lemma. For $T\in B(X):= B(X,X)$ we denote by $\mathcal{N}(T)$ and $\mathcal{R}(T)$ its kernel and range respectively.

\begin{lemma}
If $P\in B(X)$ is a projection, then $P^{**}\in B(X^{**})$ is a projection onto $\mathcal{R}(P^{**})=\mathcal{R}(P)^{**}$ and such that  $\mathcal{N}(P^{**})=\mathcal{N}(P)^{**}$.
\end{lemma}
\begin{proof}
From the multiplicative property of taking the adjoint we see that $(P^*)^2=P^*$, i.e. $P^{*}\in B(X^{*})$ is a projection and so is $P^{**}\in B(X^{**})$.

It remains to show that $\mathcal{R}(P^{**})=\mathcal{R}(P)^{**}$ and $\mathcal{N}(P^{**})=\mathcal{N}(P)^{**}$. First we shall prove that $\kappa_{0,1}(\mathcal{R}(P))\subset\mathcal{R}(P^{**})$ and $\kappa_{0,1}(\mathcal{N}(P))\subset\mathcal{N}(P^{**})$. Take any $y\in\mathcal{R}(P)$ of the form $y=Px$ for some $x\in X$. Then for any $\phi\in X^{*}$ we obtain the equalites
\begin{equation*}
    \langle P^{**}(\kappa_{0,1}(x)) ,\phi\rangle=\langle\kappa_{0,1}(x), P^{*}\phi\rangle=\langle P^{*}\phi, x\rangle=\langle \phi, Px\rangle=\langle\phi, y\rangle=\langle\kappa_{0,1}(y), \phi\rangle.
\end{equation*}
Hence $\kappa_{0,1}(\mathcal{R}(P))\subset\mathcal{R}(P^{**})$.

Similarly, for any $x\in\mathcal{N}(P)$ and $\phi\in X^{*}$ we have
\begin{equation*}
    \langle P^{**}(\kappa_{0,1}(x)) ,\phi\rangle=\langle\kappa_{0,1}(x), P^{*}\phi\rangle=\langle P^{*}\phi, x\rangle=\langle \phi, Px\rangle=0
\end{equation*}
and $\kappa_{0,1}(\mathcal{N}(P))\subset\mathcal{N}(P^{**})$. The sets $\mathcal{R}(P^{**})$ and $\mathcal{N}(P^{**})$ are weak-* closed in $X^{**}$. From the Goldstine theorem canonical images of $\mathcal{R}(P)$ and $\mathcal{N}(P)$ are weak-* dense in $\mathcal{R}(P^{**})$ and $\mathcal{N}(P^{**})$. As a consequence we obtain the inclusions $\mathcal{R}(P)^{**}\subset\mathcal{R}(P^{**})$ and $\mathcal{N}(P)^{**}\subset\mathcal{N}(P^{**})$.

The operator $P$ is a projection so we have the decomposition $X=\mathcal{R}(P)\oplus\mathcal{N}(P)$. It follows that
\begin{equation*}
    X^{**}=\mathcal{R}(P)^{**}\oplus\mathcal{N}(P)^{**}=\mathcal{R}(P^{**})\oplus\mathcal{N}(P^{**}).
\end{equation*}
Take any $\phi\in\mathcal{R}(P^{**})$. Then $\phi$ has a unique decomposition of the form $\phi=\phi_1+\phi_2$, where $\phi_1\in\mathcal{R}(P)^{**}$ and $\phi_2\in\mathcal{N}(P)^{**}$. From the already proved inclusions we obtain $\phi_1\in\mathcal{R}(P^{**})$ and $\phi_2\in\mathcal{N}(P^{**})$. Hence $\phi_2=\phi-\phi_1\in\mathcal{R}(P^{**})$. However $\mathcal{R}(P^{**})\cap\mathcal{N}(P^{**})=\left\{0\right\}$, so that $\phi=\phi_1\in\mathcal{R}(P)^{**}$. This proves the inclusion $\mathcal{R}(P^{**})\subset\mathcal{R}(P)^{**}$. The proof that $\mathcal{N}(P^{**})\subset\mathcal{N}(P)^{**}$ is analogous.
\end{proof}

We are ready to prove our next theorem.

\begin{theorem}
Let $X$ be a Banach space. If $X=U\oplus V$ for some closed subspaces $U$, $V$ of $X$ then $\normalfont\textrm{Dir}(X)=\textrm{Dir}(U)\oplus\textrm{Dir}(V)$.
\end{theorem}
\begin{proof}
Let $P\in B(X)$ be a projection such that $\mathcal{R}(P)=U$ and $\mathcal{N}(P)=V$. Then $\textrm{Dir}(P)\in B(\textrm{Dir}(X))$ and
\begin{equation*}
    \textrm{Dir}(P)=\textrm{Dir}(P^2)=\textrm{Dir}(P)^2.
\end{equation*}
Hence $\textrm{Dir}(P)$ is a bounded projection. In order to prove that $\mathcal{R}(\textrm{Dir}(P))=\textrm{Dir}(U)$ and $\mathcal{N}(\textrm{Dir}(P))=\textrm{Dir}(V)$
  we  first  show that $\textrm{Dir}(U)\subset\mathcal{R}(\textrm{Dir}(P))$.
   
   Let $\left\{U_n\right\}$ denote the tower system of $U$. Take any $[(y_n,n)]\in\bigsqcup U_n\big/_\sim$. From the previous lemma we deduce the equalities
\begin{equation*}
    \bigsqcup U_n\big/_\sim=\bigsqcup \left(\textrm{Dir}(P)(X_n \big/_\sim)\right)=\textrm{Dir}(P)\left(\bigsqcup X_n\big/_\sim\right).
\end{equation*}
It follows that $[(y_n,n)]\in\mathcal{R}(\textrm{Dir}(P))$. The set $\bigsqcup U_n\big/_\sim$ is dense in $\textrm{Dir}(U)$ and the space $\mathcal{R}(\textrm{Dir}(P))$ is closed. Hence $\textrm{Dir}(U)\subset\mathcal{R}(\textrm{Dir}(P))$.

Conversely, from the continuity of $\textrm{Dir}(P)$ we have that
\begin{equation*}
    \mathcal{R}(\textrm{Dir}(P))=\textrm{Dir}(P)\left(\overline{\bigsqcup X_n\big/_\sim}\right)\subset\overline{\textrm{Dir}(P)\left(\bigsqcup X_n\big/_\sim\right)}=\textrm{Dir}(U).
\end{equation*}
Hence $\mathcal{R}(\textrm{Dir}(P))=\textrm{Dir}(U)$.

Now we show that $\textrm{Dir}(V)\subset\mathcal{N}(\textrm{Dir}(P))$. Let $\left\{V_n\right\}$ denote the tower system of $V$. Take any $[(x_n,n)]\in\bigsqcup V_n\big/_\sim$. Using the lemma preceding this theorem we obtain the equalities
\begin{equation*}
    \bigsqcup V_n\big/_\sim=\bigsqcup \left(\mathcal{N}(\textrm{Dir}(P)\cap\left(X_n \big/_\sim\right)\right)=\mathcal{N}(\textrm{Dir}(P))\cap\left(\bigsqcup X_n\big/_\sim\right).
\end{equation*}
As a consequence $[(x_n,n)]\in\mathcal{N}(\textrm{Dir}(P))$. The set $\bigsqcup V_n\big/_\sim$ is dense in $\textrm{Dir}(V)$ and the space $\mathcal{N}(\textrm{Dir}(P))$ is closed so we obtain the inclusion $\textrm{Dir}(V)\subset\mathcal{N}(\textrm{Dir}(P))$.

Conversely, using the continuity of $\textrm{Dir}(P)$ we get the relactions
\begin{equation*}
\begin{split}
    \mathcal{N}(\textrm{Dir}(P))=\textrm{Dir}(I-P)\left(\overline{\bigsqcup X_n\big/_\sim}\right)\subset\overline{\textrm{Dir}(I-P)\left(\bigsqcup X_n\big/_\sim\right)}= \\
    =\overline{\mathcal{N}(\textrm{Dir}(P))\cap\left(\bigsqcup X_n\big/_\sim\right)}=\overline{\bigsqcup V_n\big/_\sim}=\textrm{Dir}(V).
\end{split}
\end{equation*}
Hence $\mathcal{N}(\textrm{Dir}(P))=\textrm{Dir}(V)$ and the proof is finished.
\end{proof}

Let us now recall the definition of natural transformations of functors. Let $\mathcal{C}$ and $\mathcal{C'}$ be categories. A \textit{natural transformation} of functors $F$, $G\colon\mathcal{C}\to \mathcal{C'}$ is any family  $\eta$ of mappings: $\eta=(\eta_X)$ assigned to all    objects $X$ in $\mathcal{C}$, such that

\begin{enumerate}[label={\textup{(\roman*)}}, widest=iii, leftmargin=*]
\item Each $\eta_X\colon F(X)\to G(X)$ is a morphism in $\mathcal{C'}$ for any object $X$ in   $\mathcal{C}$,
\item For every morphism $f\colon X\to Y$ in $\mathcal{C}$ the following equality holds
\begin{equation*}
\eta_Y\circ F(f)=G(f)\circ\eta_X.
\end{equation*}
\end{enumerate}

It is well known that there exists a natural transformation of the identity functor to the double dual of a vector space functor. It turns out that the same holds for the functor $\normalfont\textrm{Dir}$.

\begin{proposition}
There exists a natural transformation of the identity functor on $\normalfont\textbf{Ban}$ to the functor $\normalfont\textrm{Dir}$.
\end{proposition}
\begin{proof}
Let $X$ be a Banach space. We define $\eta_X\colon X\to \displaystyle\lim_{\longrightarrow} X_n$ by
\begin{equation*}
\eta_X(x):= [(x,0)].
\end{equation*}
For any Banach space $X$ the mapping $\eta_X$ is an isometry. It suffices to show that given any $T\in B(X,Y)$ the diagram below commutes.
\[
\begin{tikzcd}[row sep=4em, column sep=4em]
\displaystyle\lim_{\longrightarrow} X_n \arrow[r, "\displaystyle\lim_{\longrightarrow} \mathcal{J}_{n}(T)"] & \displaystyle\lim_{\longrightarrow} Y_n \\
X \arrow[r, "T"'] \arrow[u, "\eta_X"] & Y \arrow[u, "\eta_Y"']
\end{tikzcd}
\]
For $x\in X$ the following equalities hold
\begin{equation*}
\begin{split}
\left(\lim_{\longrightarrow} \mathcal{J}_{n}(T)\circ\eta_X\right)(x)=\lim_{\longrightarrow} \mathcal{J}_{n}(T)([(x,0)])= \\ =[(T(x),0)]=\eta_Y(T(x))=\left(\eta_Y\circ f\right)(x).
\end{split}
\end{equation*}
Hence the result follows.
\end{proof}

We also have an inverse system $\langle X_n^*,\kappa_{n,m}^*\rangle$. Its inverse limit is the set
\begin{equation*}
\lim_{\longleftarrow} X_n^*=\left\{\phi=(\phi_n)_{n\in\mathbb{N}}\in\prod_{n=0}^{\infty}X_n^*\colon \phi_n=\kappa_{n,m}^*(\phi_m), \ \ n\le m, \ \Vert \phi\Vert=\sup_{n}\Vert \phi_n\Vert <\infty\right\}.
\end{equation*}

It should be noted that for non-reflexive $X$ the morphisms $\kappa_{n,m}^*$ are not isometric, but they are contractive instead.

We define the mapping $\displaystyle\lim_{\longleftarrow} \mathcal{J}_{n}(T)^*\colon\lim_{\longleftarrow} Y_n^*\to\lim_{\longleftarrow} X_n^*$ as follows
\begin{equation*}
\lim_{\longleftarrow} \mathcal{J}_{n}(T)^*\left(\phi\right)(n):=\mathcal{J}_{n}(T)^*\left(\phi_n\right) \ \textrm{for} \ \phi=(\phi_n)_{n\in\mathbb{N}}\in\lim_{\longleftarrow} Y_n^*.
\end{equation*}

Finally, we define the functor $\textrm{Inv}\colon\normalfont\textbf{Ban}\to\normalfont\textbf{Ban}$ by
\begin{equation*}
\textrm{Inv}\colon X\mapsto\lim_{\longleftarrow} X_n^*,
\end{equation*}
\begin{equation*}
\textrm{Inv}\colon T\mapsto\lim_{\longleftarrow} \mathcal{J}_{n}(T)^*.
\end{equation*}
\begin{proposition}
The functor $\normalfont\textrm{Inv}$ is a contravariant functor from $\normalfont\textbf{Ban}$ to $\normalfont\textbf{Ban}$.
\end{proposition}
\begin{proof}
Take any $T\in B(X,Y)$ and $S\in B(Y,Z)$ . For $\phi=(\phi_n)_{n\in\mathbb{N}}\in\displaystyle\lim_{\longleftarrow} Z_n^*$ the following equalities hold
\begin{equation*}
\begin{split}
\lim_{\longleftarrow} \left(\mathcal{J}_{n}(S)\circ \mathcal{J}_{n}(T)\right)^*\left(\phi\right)(n)=\left(\mathcal{J}_{n}(S)\circ \mathcal{J}_{n}(T)\right)^*(\phi_n)= \\ =\left(\mathcal{J}_{n}(T)^*\circ \mathcal{J}_{n}(S)^*\right)(\phi_n)=\mathcal{J}_{n}(T)^*\left(\lim_{\longleftarrow} \mathcal{J}_{n}(S)^*\left(\phi\right)(n)\right)= \\ =\lim_{\longleftarrow} \mathcal{J}_{n}(T)^* \left(\lim_{\longleftarrow} \mathcal{J}_{n}(S)^*\left(\phi\right)(n)\right).
\end{split}
\end{equation*}
Whence we obtain $\textrm{Inv}(S\circ T)=\textrm{Inv}(T)\circ \textrm{Inv}(S)$.

It is clear that the mapping $\textrm{Inv}(T)$ is linear. We have to prove that $\textrm{Inv}(T)$ is a bounded operator. For $\phi=(\phi_n)_{n\in\mathbb{N}}\in\displaystyle\lim_{\longleftarrow} Y_n^*$ we have
\begin{equation*}
\Vert\textrm{Inv}(T)(\phi)(n)\Vert=\Vert \mathcal{J}_{n}(T)^*(\phi_n)\Vert\le\Vert \mathcal{J}_{n}(T)^*\Vert\cdot\Vert \phi_n\Vert\le\Vert T\Vert\cdot\Vert\phi\Vert.
\end{equation*}
Hence $\Vert\textrm{Inv}(T)\Vert\le\Vert T\Vert$.

Finally, for any $\phi=(\phi_n)_{n\in\mathbb{N}}\in\displaystyle\lim_{\longleftarrow} X_n^*$ we have
\begin{equation*}
\begin{split}
\textrm{Inv}(\textrm{id}_X)(\phi)(n)=\lim_{\longleftarrow} \mathcal{J}_{n}(\textrm{id}_X)^*(\phi)(n)=\mathcal{J}_{n}(\textrm{id}_X)^*(\phi_n)= \\ =\textrm{id}_{X^*_n}(\phi_n)=\phi_n=\phi(n)=\textrm{id}_{\textrm{Inv}(X)}(\phi)(n).
\end{split}
\end{equation*}
This shows that the functor $\textrm{Inv}$ preserves an identity mapping.

\end{proof}
We have the following relation between these two functors.
\begin{theorem}
The functor $\normalfont\textrm{Inv}$ is adjoint to $\normalfont\textrm{Dir}$ in the following sense:
\begin{enumerate}[label={\textup{(\roman*)}}, widest=iii, leftmargin=*]
    \item $\normalfont\textrm{Dir}(X)^*=\textrm{Inv}(X)$ for any Banach space $X$,
    \item $\normalfont\textrm{Dir}(T)^*=\textrm{Inv}(T)$ for any $T\in B(X,Y)$.
\end{enumerate}
\end{theorem}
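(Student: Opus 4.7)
The plan is to establish part (i) as an explicit isometric identification $(\displaystyle\lim_{\longrightarrow}X_n)^{*}\cong\displaystyle\lim_{\longleftarrow}X_n^{*}$, and then to derive part (ii) by unraveling definitions under this identification.

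For part (i), I would start from the observation that the canonical maps $\alpha_n\colon X_n\to\displaystyle\lim_{\longrightarrow}X_n$ satisfy $\alpha_m\circ\kappa_{n,m}=\alpha_n$ for all $n\le m$, since $[(x_n,n)]=[(\kappa_{n,m}(x_n),m)]$. Given $\varphi\in(\displaystyle\lim_{\longrightarrow}X_n)^{*}$, I define $\psi_n:=\varphi\circ\alpha_n\in X_n^{*}$. Dualising the relation $\alpha_m\circ\kappa_{n,m}=\alpha_n$ yields $\kappa_{n,m}^{*}(\psi_m)=\psi_n$, so $(\psi_n)\in\displaystyle\lim_{\longleftarrow}X_n^{*}$, and contractivity of $\alpha_n$ gives $\sup_n\Vert\psi_n\Vert\le\Vert\varphi\Vert$.

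Conversely, for $(\psi_n)\in\displaystyle\lim_{\longleftarrow}X_n^{*}$, I would define a functional $\varphi$ on the dense subspace $\bigsqcup_{n}X_n\big/_\sim$ of $\displaystyle\lim_{\longrightarrow}X_n$ by $\varphi([(x_n,n)]):=\psi_n(x_n)$. Independence of the representative follows from $\psi_n=\kappa_{n,m}^{*}(\psi_m)$: if $(x_n,n)\sim(x_m,m)$ then some $k\ge n,m$ satisfies $\kappa_{n,k}(x_n)=\kappa_{m,k}(x_m)$, and both $\psi_n(x_n)$ and $\psi_m(x_m)$ reduce to $\psi_k(\kappa_{n,k}(x_n))$. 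Since the direct system is isometric, $\Vert[(x_n,n)]\Vert=\Vert x_n\Vert$, hence
\begin{equation*}
|\varphi([(x_n,n)])|\le\Vert\psi_n\Vert\cdot\Vert x_n\Vert\le\Big(\sup_{k}\Vert\psi_k\Vert\Big)\cdot\Vert[(x_n,n)]\Vert.
\end{equation*}
Extending $\varphi$ by continuity to the completion yields $\varphi\in(\displaystyle\lim_{\longrightarrow}X_n)^{*}$ with $\Vert\varphi\Vert\le\sup_n\Vert\psi_n\Vert$. The two constructions are mutually inverse, and the two norm inequalities combine to give an isometric isomorphism, which establishes (i).

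Part (ii) then becomes a brief calculation under this identification. For $y\in\displaystyle\lim_{\longleftarrow}Y_n^{*}$ and $[(x_n,n)]\in\displaystyle\lim_{\longrightarrow}X_n$,
\begin{equation*}
\langle\textrm{Dir}_1(f)^{*}(y),[(x_n,n)]\rangle=\langle y,\textrm{Dir}_1(f)([(x_n,n)])\rangle=y_n(\mathcal{J}_n(f)(x_n))=\mathcal{J}_n(f)^{*}(y_n)(x_n),
\end{equation*}
and the last expression is precisely the $n$th coordinate $\textrm{Inv}_1(f)(y)(n)$ of $\textrm{Inv}_1(f)(y)$ evaluated at $x_n$, that is, the image of $[(x_n,n)]$ under $\textrm{Inv}_1(f)(y)$ viewed as an element of $(\displaystyle\lim_{\longrightarrow}X_n)^{*}$ via (i). Since this holds on a dense subspace, the two functionals agree. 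The main obstacle in the whole argument is the verification carried out in part (i): checking well-definedness of $\varphi$ across equivalence classes and obtaining the sharp norm equality $\Vert\varphi\Vert=\sup_n\Vert\psi_n\Vert$. Once (i) is in place, (ii) is essentially the defining property of the Banach space adjoint combined with the formula $\mathcal{J}_n(f)^{**}=\mathcal{J}_{n+1}(f)$ used implicitly in Lemma 1.
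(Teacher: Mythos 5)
Your proposal is correct and follows essentially the same route as the paper: pairing a functional $\varphi$ on the direct limit with the coherent family $(\varphi\circ\alpha_n)$, defining the inverse construction on the dense quotient and extending by continuity, and then verifying (ii) by the same direct computation with the adjoint. Your version is marginally more complete in that you check well-definedness via a common index $k$ with $\kappa_{n,k}(x_n)=\kappa_{m,k}(x_m)$ and record explicitly that the two constructions are mutually inverse isometries, but these are refinements of the identical argument rather than a different approach.
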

\begin{proof}
To prove the first part of this theorem we have to show that
\begin{equation*}
\left(\lim_{\longrightarrow} X_n\right)^*=\lim_{\longleftarrow} X_n^*,
\end{equation*}
up to an isometric isomorphism. Take any $\phi\in\displaystyle\left(\lim_{\longrightarrow} X_n\right)^*$. Define $\phi_n:=\phi\circ\kappa_{n,\infty}$, where $\kappa_{n,\infty}\colon X_n\to\displaystyle\lim_{\longrightarrow} X_n$ is the canonical mapping sending each element to the corresponding equivalence class. We claim that $\Psi(\phi):=(\phi_n)_{n\in\mathbb{N}}$ is an element of $\displaystyle\lim_{\longleftarrow} X_n^*$. For any $\gamma\in X_n$ the following equalities hold
\begin{equation*}
\begin{split}
\langle\kappa_{n,n+1}^*(\phi_{n+1}), \gamma\rangle=\langle\phi_{n+1}, \kappa_{n,n+1}(\gamma)\rangle=\langle\phi\circ\kappa_{n+1,\infty}, \kappa_{n,n+1}(\gamma)\rangle= \\
=\langle\phi, \left(\kappa_{n+1,\infty}\circ\kappa_{n,n+1}\right)(\gamma)\rangle=\langle\phi,  \kappa_{n,\infty}(\gamma)\rangle=\langle\phi\circ\kappa_{n,\infty}, \gamma\rangle=\langle\phi_n, \gamma\rangle.
\end{split}
\end{equation*}
Hence $\phi_n=\kappa_{n,n+1}^*\left(\phi_{n+1}\right)$ for any $n\in\mathbb{N}$. For arbitrary $n\le m$ we have
\begin{equation*}
\begin{split}
\kappa_{n,m}^*(\phi_m)=\left(\kappa_{m-1,m}\circ\ldots\circ\kappa_{n,n+1}\right)^*(\phi_m)=\left(\kappa_{n,n+1}^*\circ\ldots\circ\kappa_{m-1,m}^*\right)(\phi_m)= \\
=\left(\kappa_{n,n+1}^*\circ\ldots\circ\kappa_{m-2,m-1}^*\right)(\phi_{m-1})=\ldots =\phi_n.
\end{split}
\end{equation*}
Also for any $n\in\mathbb{N}$ the following inequalities are satisfied
\begin{equation*}
\Vert\phi_n\Vert=\Vert\phi\circ\kappa_{n,\infty}\Vert\le\Vert\phi\Vert\cdot\Vert\kappa_{n,\infty}\Vert\le\Vert\phi\Vert,
\end{equation*}
so that $\Vert\Psi (\phi)\Vert\le\Vert\phi\Vert$. Whence $\Psi\in\displaystyle\lim_{\longleftarrow} X_n^*$.

Now take any $(\psi_n)_{n\in\mathbb{N}}\in\displaystyle\lim_{\longleftarrow} X_n^*$. Define the mapping $\Psi(\psi)$ first on a a dense subset of $\displaystyle\lim_{\longrightarrow} X_n$ by
\begin{equation}
\Psi(\psi)\left([(x_n,n)]\right):=\psi_n(x_n).
\end{equation}
We prove that this mapping is well defined. Take $x_n\in X_n$ and $x_m\in X_m$ with $n\le m$ such that $x_n\sim x_m$. Hence $x_m=\kappa_{n,m}(x_n)$ and
\begin{equation*}
\langle\psi_m, x_m\rangle=\langle\psi_m, \kappa_{n,m}(x_n)\rangle=\langle\kappa_{n,m}^*(\psi_m),x_n\rangle=\langle\psi_n, x_n\rangle.
\end{equation*}
This shows that the mapping (2) is well defined. We prove that $\Psi(\psi)$ is a bounded linear functional. For any $x_n\in X_n$, $x_m\in X_m$ with $n\le m$ and $\alpha\in\mathbb{C}$ we have
\begin{equation*}
\Psi(\psi)\left(\alpha\cdot[(x_n,n)]\right)=\Psi(\psi)\left([(\alpha x_n,n)]\right)=\psi_n(\alpha x_n)=\alpha\psi_n(x_n)=\alpha\Psi(\psi)\left([(x_n,n)]\right),
\end{equation*}
also there exists $k\ge m\ge n$ such that
\begin{equation*}
\begin{split}
\Phi\left([(x_n,n)]+[(x_m,m)]\right)=\Psi(\psi)\left([(\kappa_{n,k}(x_n)+\kappa_{m,k}(x_m),k)]\right)= \\
=\psi_k(\kappa_{n,k}(x_n)+\kappa_{m,k}(x_m))=\left(\psi_k\circ\kappa_{n,k}\right)(x_n)+\left(\psi_k\circ\kappa_{m,k}\right)(x_m)= \\
=\left(\kappa_{n,k}^*(\psi_k)\right)(x_n)+\left(\kappa_{m,k}^*(\psi_k)\right)(x_m)=\psi_n(x_n)+\psi_m(x_m)= \\
=\Psi(\psi)\left([(x_n,n)]\right)+\Psi(\psi)\left([(x_m,m)]\right).
\end{split}
\end{equation*}
Hence $\Psi(\psi)$ is linear. For $x_n\in X_n$ we have
\begin{equation*}
\Vert\Psi(\psi)\left([(x_n,n)]\right)\Vert=\Vert\psi_n(x_n)\Vert\le\Vert\psi_n\Vert\cdot\Vert x_n\Vert\le\Vert\psi\Vert\cdot\Vert x_n\Vert.
\end{equation*}
Whence $\Psi(\psi)$ is a bounded linear functional. By continuity we may extend $\Psi(\psi)$ to the entire space $\displaystyle\lim_{\longrightarrow} X_n$.

The mappings $\displaystyle\Phi\colon\left(\lim_{\longrightarrow} X_n\right)^*\ni\phi\mapsto\Phi(\phi)\in\lim_{\longleftarrow} X_n^*$ and $\displaystyle\Psi\colon\lim_{\longleftarrow} X_n^*\ni\psi\mapsto\Psi(\psi)\in\left(\lim_{\longrightarrow} X_n\right)^*$
are inverse of each other and are contractions hence the spaces are isometrically isomorphic.

For the second claim of the theorem we need to verify that
\begin{equation*}
\left(\lim_{\longrightarrow} \mathcal{J}_{n}(T)\right)^*=\lim_{\longleftarrow} \mathcal{J}_{n}(T)^*.
\end{equation*}
Take arbitrary $\phi\in\displaystyle\left(\lim_{\longrightarrow} Y_n\right)^*$ and $x_n\in X_n$. Hence
\begin{equation*}
\begin{split}
\left\langle\left(\lim_{\longrightarrow} \mathcal{J}_{n}(T)\right)^*(\phi), [(x_n,n)]\right\rangle =\langle\phi, \lim_{\longrightarrow} \mathcal{J}_{n}(T)\left([(x_n,n)]\right)\rangle = \\ =\langle\phi, [(\mathcal{J}_{n}(T)(x_n),n)]\rangle
=\langle\phi\circ\kappa_{n,\infty}, \mathcal{J}_{n}(T)(x_n)\rangle = \\ =\langle \mathcal{J}_{n}(T)^*(\phi\circ\kappa_{n,\infty}), x_n \rangle =\left\langle\lim_{\longleftarrow} \mathcal{J}_{n}(T)^*(\phi\circ\kappa_{n,\infty}),x_n\right\rangle.
\end{split}
\end{equation*}
It follows from the identification $\displaystyle\left(\lim_{\longrightarrow} Y_n\right)^*=\lim_{\longleftarrow} Y_n^*$ that an element $\phi$ is identified with the sequence $(\phi\circ\kappa_{n,\infty})_{n\in\mathbb{N}}$ and we obtain the required equality.  The second part of the theorem is proved.
\end{proof}

The following fact is an easy consequence of previous theorems.

\begin{corollary}
Let $X$ be a Banah space. If $X=U\oplus V$ for some closed subspaces $U$, $V$ of $X$ then $\normalfont\textrm{Inv}(X)=\textrm{Inv}(U)\oplus\textrm{Inv}(V)$.
\end{corollary}
\begin{proof}
From Corollary 2.6 we know that $\textrm{Dir}(X)=\textrm{Dir}(U)\oplus\textrm{Dir}(V)$. It follows that $\textrm{Dir}(X)^{*}=\textrm{Dir}(U)^{*}\oplus\textrm{Dir}(V)^{*}$. Applying part (i) of Theorem 2.9 to the last equality finishes the proof.
\end{proof}

In the next result we shall prove that the action of our functors preserves the spectrum of an operator.

\begin{theorem}
Let $X$ be a Banach space. For any operator $T\in B(X)$ the following equalities hold
\begin{equation*}
\normalfont\sigma(T)=\sigma(\textrm{Dir}(T))=\sigma(\textrm{Inv}(T)),
\end{equation*}
where $\sigma(T)$ denotes the spectrum of $T$.
\end{theorem}
\begin{proof}
Suppose that $0\notin\sigma(T)$ and denote by $T^{-1}\in\mathcal{B}(X)$ an inverse of $T$. From the properties of functor $\textrm{Dir}$ we obtain
\begin{equation*}
\textrm{Dir}(TT^{-1})=\textrm{Dir}(T)\textrm{Dir}(T^{-1})=\textrm{Dir}(\textrm{id}_X)=\textrm{id}_{\textrm{Dir}(X)}.
\end{equation*}
It follows that $0\notin\sigma(\textrm{Dir}(T))$ and consequently $\sigma(\textrm{Dir}(T))\subset\sigma(T)$.

In order to prove the second inclusion assume that $0\in\sigma(T)$. There are two possibilities: either $0$ belongs to the approximate point spetrum of $T$ or the range $\mathcal{R}(T)$ of $T$ isn't dense in $X$. In the first case there exists a sequence $(x_k)$ of elements of $X$ such that $\Vert x_k\Vert =1$ and $Tx_k\to 0$. Hence
\begin{equation*}
\textrm{Dir}(T)([(x_k,0)])=[(Tx_k,0)]\to [(0,0)]
\end{equation*}
and $0$ is in the approximate point spectrum of $\textrm{Dir}(T)$.

Suppose that $\overline{\mathcal{R}(T)}\ne X$. By Hahn-Banach theorem there exists a non-zero functional $\phi\in X^{*}$ vanishing on $\mathcal{R}(T)$. Let $\phi_n:=\kappa_{0,n}(\phi)$ so that $\phi_n\in X_n^{*}$. We define the functional $\Phi$ on a dense subset of $\textrm{Dir}(X)$ by
\begin{equation*}
\Phi([(x_n,n)])=\phi_n(x_n).
\end{equation*}
By continunity $\Phi$ has a unique extension to $\Phi\in\textrm{Dir}(X)^{*}$. We will prove that $\Phi$ belongs to the kernel $\mathcal{N}(\textrm{Dir}(T)^{*})$ of $\textrm{Dir}(T)^{*}$. For any $x_n\in X_n$ we have
\begin{equation}
\begin{split}
\langle\textrm{Dir}(T)^{*}\circ\Phi , [(x_n,n)] \rangle =\langle\Phi , \textrm{Dir}(T)([(x_n,n)]) \rangle = \\ =\langle\Phi , [(\mathcal{J}_n(T) x_n,n)]\rangle =\langle \phi_n , \mathcal{J}_n(T) x_n \rangle.
\end{split}
\end{equation}
We claim that each $\phi_n$ vanishes on $\mathcal{R}(\mathcal{J}_n(T))$. It sufficies to check if this property holds for $n=1$. For any $x\in X$ we have
\begin{equation*}
\begin{split}
\langle\phi_1, T^{**}(\kappa_{0,1}(x))\rangle =\langle\kappa_{0,1}(\phi), T^{**}(\kappa_{0,1}(x))\rangle = \langle T^{**}(\kappa_{0,1}(x)), \phi \rangle = \\ = \langle\kappa_{0,1}(x), T^{*}\phi \rangle = \langle T^{*}\phi , x \rangle =  \langle \phi , Tx \rangle = 0.
\end{split}
\end{equation*}
By Goldstine theorem the canonical image of $X$ is weak-* dense in $X^{**}$. Since $T^{**}$ is weak-* continuous it follows that $\phi_1$ vanishes on $\mathcal{R}(T^{**})$. Consequently, from (3) we deduce that $\mathcal{N}(\textrm{Dir}(T)^{*})$ is non-zero or equivalently $\overline{\mathcal{R}(\textrm{Dir}(T))}\ne\textrm{Dir}(X)$. Hence $\sigma(T)=\sigma(\textrm{Dir}(T))$. The equality $\sigma(\textrm{Dir}(T))=\sigma(\textrm{Inv}(T))$ follows from the fact that $\textrm{Inv}(T)$ is adjoint to $\textrm{Dir}(T)$.
\end{proof}

Our next result shows that for a Fredholm operator $T\in B(X)$ the action of our functors yields also a Fredholm operator. Moreover, corresponding equalities of indices are satisfied. Let us recall that for a Fredholm operator $T$ its index, denoted by $\textrm{i}(T)$, is the difference between the dimension of kernel of $T$ and the dimension of cokernel of $T$. Then $T^*\in B(X^{*})$ is also  Fredholm   and $\textrm{i}(T^{*})=-\textrm{i}(T)$.

\begin{theorem}
Let $X$ be a Banach space possessing an approximation property. If $T\in B(X)$ is a Fredholm operator then $\normalfont\textrm{Dir}(T)$ and $\normalfont\textrm{Inv}(T)$ are also Fredholm operators. What is more, the equalities $\normalfont\textrm{i}(T)=\textrm{i}(\textrm{Dir}(T))=-\textrm{i}(\textrm{Inv}(T))$ are satisfied.
\end{theorem}
\begin{proof}
Let $T\in B(X)$ be a Fredholm operator. By   \cite[Lemma 4.39]{A} there exists a closed subspace $V$ and a finite dimensional subspace $W$ such that
\begin{equation*}
    T=0\oplus S\colon\mathcal{N}(T)\oplus V\to W\oplus\mathcal{R}(T),
\end{equation*}
where $0$ denotes the operator constantly equal $0$ and $S:=T\big|_V$  is an isomorphism. From Theorem 2.6 we obtain that
\begin{equation*}
    \textrm{Dir}(T)=0\oplus\textrm{Dir}(S)\colon\textrm{Dir}(\mathcal{N}(T))\oplus\textrm{Dir}(V)\to\textrm{Dir}(W)\oplus\textrm{Dir}(\mathcal{R}(T)).
\end{equation*}
From Proposition 2.2 operator $\textrm{Dir}(S)$ is also an isomorphism and we get the equalities $\mathcal{N}(\textrm{Dir}(T))=\textrm{Dir}(\mathcal{N}(T))$ and $\mathcal{R}(\textrm{Dir}(T))=\textrm{Dir}(\mathcal{R}(T))$. Hence from the fact that $\mathcal{N}(T)$ and $W$ are finite dimensional we have that
\begin{equation*}
    \dim\mathcal{N}(\textrm{Dir}(T))=\dim\textrm{Dir}(\mathcal{N}(T))=\dim\mathcal{N}(T)<\infty
\end{equation*}
and
\begin{equation*}
\begin{split}
    \dim\textrm{Dir}(X)\big/\mathcal{R}(\textrm{Dir}(T))=\dim\textrm{Dir}(X)\big/\textrm{Dir}(\mathcal{R}(T))= \\
    =\dim\textrm{Dir}(W)=\dim W=\dim X\big/\mathcal{R}(T)<\infty.
\end{split}
\end{equation*}
From above equalities and from part (ii) of Theorem 2.9 we obtain that $\textrm{Dir}(T)$ and $\textrm{Inv}(T)$ are Fredholm operators and $\textrm{i}(T)=\textrm{i}(\textrm{Dir}(T))=-\textrm{i}(\textrm{Inv}(T))$.
\end{proof}

In the last theorem of this section we are going to prove that   our functors preserve compactness of operators. Our method of proof requires that the Banach space $X$ has an \textit{approximation property} so that any compact operator on $X$ is a norm-limit of finite rank operators.

\begin{theorem}
Let $X$ be a Banach space possessing an approximation property. If $T\in B(X)$ is a compact operator then $\normalfont\textrm{Dir}(T)$ and $\normalfont\textrm{Inv}(T)$ are also compact operators.
\end{theorem}
\begin{proof}
By the assumption $T$ is a limit of finite rank operators. Since $\Vert T\Vert=\Vert\textrm{Dir}(T) \Vert$ it suffices to prove that rank of $\textrm{Dir}(T)$ equals the rank of $T$, even in the case when this rank equals one. Indeed, finite rank operators are sums of rank one operators and $\textrm{Dir}$ preserves addition of operators. In the later case $T$ has the form $Tx=\varphi(x)z$ for $\varphi\in X^{*}$ and $z\in X$. For $\gamma\in X_n$ we have
\begin{equation*}
    \mathcal{J}_{n}(T)(\gamma)=\kappa_{0,n}(\varphi)(\gamma)\kappa_{0,n}(z).
\end{equation*}
Hence
\begin{equation*}
\begin{split}
    \textrm{Dir}(T)(\kappa_{n,\infty}(\gamma))=\kappa_{n,\infty}(\mathcal{J}_{n}(T)(\gamma))=\kappa_{n,\infty}(\kappa_{0,n}(\varphi)(\gamma)\kappa_{0,n}(z))= \\ =\kappa_{0,n}(\varphi)(\gamma)(\kappa_{n,\infty}\circ\kappa_{0,n})(z)=\kappa_{0,n}(\varphi)(\gamma)\kappa_{0,\infty}(z)=\Phi(\gamma)\kappa_{0,\infty}(z),
    \end{split}
\end{equation*}
where $\Phi$ is a functional $\displaystyle\lim_{\longleftarrow}\kappa_{0,n}(\varphi)$. By the density of $\bigcup\kappa_{n,\infty}(X_n)$ in $\textrm{Dir}(X)$ it follows that $\textrm{Dir}(T)$ is a rank one operator. From Schauder theorem we deduce that $\textrm{Inv}(T)$ is also a compact operator.
\end{proof}

We conjecture that the approximation property assumption can be omitted.

\begin{hypothesis}
For any Banach space $X$ if $T\in B(X)$ is a compact operator then $\normalfont\textrm{Dir}(T)$ is a compact operator.
\end{hypothesis}

\section{Supports of representing measures}

In this section we outline an application of our functors to uniform algebras.  Here  the functor  Dir assigns to a uniform algebra another function algebra by extending Arens product.  It should be noted that  the inverse system consists of spectra of bidual uniform algebras, we restrict Inv to multiplicative linear functionals, sending the unit element to 1. This "restricted Inv" will act as a functor in the category of compact Hausdorff spaces.

Let $K$ be a compact Hausdorff space. By a \textit{uniform algebra} on $K$ we understand a closed unital subalgebra $A$ of $C(K)$ separating the points of $K$. An important example of a uniform algebra is $A(G)$, the algebra of those analytic functions on a strictly pseudoconvex domain $G\subset\mathbb{C}^d$, which have continuous extensions to the Euclidean closure $\overline G$.

The \textit{spectrum} of $A$, denoted by $\textrm{Sp}(A)$, is the set of all nonzero multiplicative and linear functionals on $A$. Endowed with the Gelfand (=weak-*) topology,  $\textrm{Sp}(A)$ is a compact Hausdorff space, containing a homeomorphic copy of $K$. The  natural embedding of $K$ into the spectrum is given by    $K\ni x\mapsto\delta_x\in\textrm{Sp}(A)$, where $\delta_x(f)=f(x)$ for $f\in A$.  
A uniform algebra $A$ on $K$ is called a \textit{natural uniform algebra} if $\textrm{Sp}(A)=K$ in the sense of this embedding. It is known that $A(G)$ is natural on $G$ if the domain $G$ is strictly pseudoconvex (see \cite{HS}).

Let $A$ be a Banach algebra. For $\lambda\in A^{*}$, define $a\cdot\lambda$ and $\lambda\cdot a$ by duality
\begin{equation*}
\langle a\cdot\lambda, b\rangle:=\langle\lambda, ba\rangle, \quad \langle\lambda\cdot a,b\rangle:=\langle\lambda, ab\rangle, \quad a,b\in A.
\end{equation*}
Now, for $\lambda\in A^{*}$ and $M\in A^{**}$, define $\lambda\cdot M$ and $M\cdot\lambda$ by
\begin{equation*}
\langle \lambda\cdot M,a\rangle:=\langle M,a\cdot\lambda\rangle, \quad \langle M\cdot\lambda ,a\rangle:=\langle M, \lambda\cdot a\rangle, \quad a\in A.
\end{equation*}
Finally, for $M,N\in A^{**}$, define
\begin{equation*}
\langle M\Box N,\lambda\rangle:=\langle M,N\cdot\lambda\rangle, \quad \langle M\Diamond N,\lambda\rangle:=\langle N,\lambda\cdot M\rangle, \quad \lambda\in A^{*}.
\end{equation*}

The products $\Box$ and $\Diamond$ are called, respectively, the \textit{first} and \textit{second Arens products} on $A^{**}$. A bidual of $A$ is Banach algebra with respect to Arens products. The natural embedding of $A$ into its bidual identifies $A$ as a norm -- closed subalgebra of both $(A^{**},\Box)$ and $(A^{**},\Diamond)$.

All $C^{*}$-algebras are \emph{Arens regular} in the sense, that the two products $\Box$ and $\Diamond$ agree on $A^{**}$. Closed subalgebras of Arens regular algebras are Arens regular, hence all uniform algebras are Arens regular. Also the bidual of uniform algebra is again  a uniform algebra with respect to the Arens products  (cf.\cite{Da} and \cite{D}). 

 There is an equivalence relation on the spectrum  of a uniform algebra given by
\begin{equation*}
\Vert\phi -\psi\Vert <2,
\end{equation*}
with $\Vert\cdot\Vert$ denoting the norm in $A^*$ for $\phi$, $\psi\in\textrm{Sp}(A)$. The equivalence classes under the above relation are called \textit{Gleason parts}.

The space of complex, regular Borel measures on $K$, with total variation norm will be denoted by $M(K)$. As the consequence of Riesz-Markov-Kakutani Representation Theorem we have $M(K)=C(K)^*$.

We say that $\mu\in M(K)$ is \textit{representing}   \textit{measure} for $\phi\in\textrm{Sp}(A)$ if $\mu $ is probabilistic and

\begin{equation*}
\phi(f)=\int_Kfd\mu \quad \textrm{for any} \quad f\in A.
\end{equation*}

The set of all representing measures for $\phi\in\textrm{Sp}(A)$ is denoted by $M_{\phi}(K)$. By \cite[Proposition 8.2]{Co} for any functional $\phi\in\textrm{Sp}(A)$ there exists at least one representing measure. In fact this measure can be chosen with its support contained in the Shilov boundary of $A$. We refer to \cite{Co}, \cite{G} and \cite{S} for further information on uniform algebras.

The idea of using second duals in studying the spectrum of $H^\infty(G)$ is based on the observation that this algebra can be seen either as a weak-* closure of a (much easier to study) algebra $A:=A(G)$ in  $A^{**}$  or as a quotient algebra  of $A^{**}$ by one of its  ideals. Moreover, to any Gleason part $\gamma $ of $\textrm{Sp}(A)$ there corresponds an idempotent $g\in A^{**}$ vanishing on $\textrm{Sp}(A)\setminus \gamma$ and equal 1 on $
\gamma$ (also $g=1$ on its w-* closure $\overline{\gamma}$ in $\textrm{Sp}(A^{**})$). But the behaviour of $g$ outside $\overline{\gamma}$ is hard to control.

It would be more convenient to have the idempotent related to $\gamma$ in the same algebra $A$ (or in the canonical image of $A$ in $A^{**}$). Unfortunately, this is impossible. But getting close to such a situation  is offered by direct limits, where the passage from $n$ to $ n+1$  is in a sense reminiscent to the "Hilbert hotel method".

Let $A$ be a natural uniform algebra on $K$. We define $A_0:=A$, $A_{n+1}:=A_n^{**}$ for $n=0,1,2,\ldots $. By $\mathcal{A}$ we denote the direct limit of the direct system $\langle A_n, \kappa_{n,m}\rangle$. Let $f\in A_n$ and $h\in A_m$ for some $n\le m$. The multiplication on $\mathcal{A}$ is given by
\begin{equation*}
[(f,n)]\cdot [(h,m)]:=[(\kappa_{n,m}(f)\cdot h,m)].
\end{equation*}
Since $\langle A_n, \kappa_{n,m}\rangle$ is an isometric direct system of uniform algebras its direct limit is again a uniform algebra.


For $\mu\in M(\textrm{Sp}(A))$, where $A$ is a natural uniform algebra on $K$, we define a measure $k(\mu)\in M(\textrm{Sp}(C(K)^{**}))$ by duality
\begin{equation*}
\langle k(\mu), f\rangle := \langle f, \mu\rangle, \quad f\in C(\textrm{Sp}(C(K)^{**})).
\end{equation*}
In this definition we use an identification $M(K)^*=C(\textrm{Sp}(C(K)^{**})$. The identification is valid because the second dual of $C(K)$ endowed with Arens product is a commutative $C^*$-algebra. Hence by Gelfand-Najmark theorem it is isometrically isomorphic to $C(\textrm{Sp}(C(K)^{**}))$.

Let $\phi\in\textrm{Sp}(A)$. The mapping $k\colon\textrm{Sp}(A)\to\textrm{Sp}(C(K)^{**})$ is given by

\begin{equation*}
k(\phi)(F)=\int Fdk(\nu ),
\end{equation*}
where $F\in C(K)^{**}$ and $\nu\in M_{\phi}(K)$. It can be easily shown that the function $k$ does not depend on the choice of the representing measure $\nu$.

On the set $\textrm{Sp}(C(K)^{**})$ we introduce an equivalence relation $\simeq$ as follows
\begin{equation*}
x\simeq y \ \textrm{if and only if} \ f(x)=f(y) \ \textrm{for all} \ f\in A^{**}.
\end{equation*}
The mapping $\Pi\colon\textrm{Sp}(C(K)^{**})\to\textrm{Sp}(C(K)^{**})\big/_\simeq\subset\textrm{Sp}(A^{**})$ denotes the canonical surjection assigning to each element of $\textrm{Sp}(C(K)^{**})$ its equivalence class. Finally, we define $j\colon\textrm{Sp}(A)\to\textrm{Sp}(A^{**})$ by $j:=\Pi\circ k$.

Extending previous definitions to the $n$-th level spaces we obtain functions $j_n\colon\textrm{Sp}(A_n)\to\textrm{Sp}(A_{n+1})$. For $n\le m$ we define $j_{n,m}\colon \textrm{Sp}(A_n)\to \textrm{Sp}(A_m)$ by
\begin{equation*}
j_{n,m}:=j_{m-1}\circ j_{m-2}\circ\dots\circ j_{n}, \ \textrm{if} \ n<m.
\end{equation*}
and $j_{n,n}$ is defined to be the identity mapping. For $x\in\textrm{Sp}(A_n)$ denote by $j_{n,\infty}(x)$ embedding into the inverse limit $\displaystyle\lim_{\longleftarrow}\textrm{Sp}(A_n)$, defined by
\begin{equation*}
j_{n,\infty}(x):=(\kappa_{0,n}^*(x),\ldots,\kappa_{n-1,n}^*(x),x,j_{n,n+1}(x),j_{n,n+2}(x),\ldots).
\end{equation*}
It should be noted that the inverse limit of the spectrum is limit in the category of compact Hausdorff spaces. In this case part (i) of Theorem 2.9 becomes the equality
\begin{equation*}
\lim_{\longleftarrow}\textrm{Sp}(A_n)=\textrm{Sp}(\mathcal{A}).
\end{equation*}

The following assumption will be needed: there exists an open Gleason part $G$ satisfying
\begin{equation}
j_{0,n}(G)=(\kappa_{0,n}^*)^{-1}(G) \ \textrm{for} \ n=1,2,\ldots,\infty \tag{$\dagger$} \label{eq:special}.
\end{equation}
It is shown in \cite{KR} that this assumption is satisfied for $A=A(G)$ if $G\subset\mathbb{C}^d$, $d>1$, is a strictly pseudoconvex domain.

Let $G\subset\mathbb{C}^d$,  be a strictly pseudoconvex domain. By $H^{\infty}(G)$ we denote the Banach algebra of all bounded and analytic functions on $G$. The Corona theorem states that the set $G$ (identified with the set of evaluation functionals) is dense (in  Gelfand topology) in the spectrum of $H^{\infty}(G)$  the Banach algebra of all bounded   analytic functions on $G$. The following abstract result was a key to the proof of Corona theorem by M. Kosiek and K. Rudol.
\begin{theorem}[\cite{KR}]
Let $A$ be a natural uniform algebra on $K$. If $G$ is an open Gleason part in $\emph{Sp}(A)$ satisfying \textup{(}$\dagger $\textup{)} then support of every representing measure $\mu_0$ for $A_n$ at any point $x_0$ in $j_{0,n}(G)$ lies in the Gelfand closure of $j_{0,n}(G)$.
\end{theorem}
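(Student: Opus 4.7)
The plan is induction on $n$. Write $H_n := j_{0,n}(G) \subseteq \textrm{Sp}(A_n)$. Two structural consequences of ($\dagger$) will be used. Since $G$ is open and $\iota_{n,0}^{*}$ is continuous, each $H_n = (\iota_{n,0}^{*})^{-1}(G)$ is open in $\textrm{Sp}(A_n)$; combining ($\dagger$) at levels $n-1$ and $n$ with the factorization $\iota_{n,0}^{*} = \iota_{n-1,0}^{*} \circ \iota_{n,n-1}^{*}$ further gives $(\iota_{n,n-1}^{*})^{-1}(H_{n-1}) = H_n$, so the family $(H_n)_n$ is compatible with the projections $\iota_{n,n-1}^{*}$. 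The base case $n=0$ reads: every representing measure of a point of $G$ on $A$ is supported in $\overline{G}$. In the motivating case $A = A(G)$ with strictly pseudoconvex $G$ this is immediate since every such representing measure sits on the Shilov boundary $\partial G \subseteq \overline{G}$.

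For the inductive step, take $x_0 \in H_n$ and a representing measure $\mu_0$ for $x_0$ on $A_n$. Unwinding the Arens product and the Gelfand transform, the pushforward $\tilde{\mu} := (\iota_{n,n-1}^{*})_{*} \mu_0$ turns out to be a representing measure on $A_{n-1}$ for $\iota_{n,n-1}^{*}(x_0) \in H_{n-1}$: for $g \in A_{n-1}$ one computes $\int \widehat{g}\, d\tilde{\mu} = \int \widehat{\iota_{n,n-1}(g)}\, d\mu_0 = x_0(\iota_{n,n-1}(g)) = \iota_{n,n-1}^{*}(x_0)(g)$. By the inductive hypothesis $\textrm{supp}(\tilde{\mu}) \subseteq \overline{H_{n-1}}$, hence $\textrm{supp}(\mu_0) \subseteq (\iota_{n,n-1}^{*})^{-1}(\overline{H_{n-1}})$.

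The main obstacle is to upgrade this to $\textrm{supp}(\mu_0) \subseteq \overline{H_n}$, since the preimage of a closure can be strictly larger than the closure of the preimage. I would exploit the Gelfand-Najmark isomorphism $C(\textrm{Sp}(A_{n-1}))^{**} \cong C(\textrm{Sp}(C(\textrm{Sp}(A_{n-1}))^{**}))$ featured in Section 3: since $H_{n-1}$ is open, its characteristic function corresponds on the $C^{*}$-algebra side to an idempotent in $C(\textrm{Sp}(A_{n-1}))^{**}$, and via the natural inclusion $A_n = A_{n-1}^{**} \hookrightarrow C(\textrm{Sp}(A_{n-1}))^{**}$ (coming from the Gelfand embedding $A_{n-1} \hookrightarrow C(\textrm{Sp}(A_{n-1}))$) one would extract an element $e \in A_n$ whose Gelfand transform is the characteristic function of $\overline{H_n}$, making essential use of the compatibility of the $H_n$'s noted above. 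Testing $1 = x_0(e) = \int e\, d\mu_0$ would then force $\textrm{supp}(\mu_0) \subseteq \{e = 1\} = \overline{H_n}$, closing the induction.

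The direct-limit machinery of Section 2 is precisely what packages this inductive ladder into a single statement at $n = \infty$, which is the form in which the result is invoked in \cite{KR}. The technical heart, and the main difficulty, lies in fabricating the separating element $e$ at each level so that it descends from $C(\textrm{Sp}(A_{n-1}))^{**}$ into $A_n$ compatibly with the system $(H_n)$.
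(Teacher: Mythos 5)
The paper does not actually prove this theorem: it is quoted from \cite{KR} and the reader is referred there for the proof, so there is no in-paper argument to measure your attempt against. Judged on its own terms, your skeleton is sound in its formal parts --- the openness of $H_n=(\iota_{n,0}^*)^{-1}(G)$, the compatibility $(\iota_{n,n-1}^*)^{-1}(H_{n-1})=H_n$, and the computation showing that $(\iota_{n,n-1}^*)_*\mu_0$ is a representing measure for $\iota_{n,n-1}^*(x_0)\in H_{n-1}$ are all correct --- but both load-bearing steps are missing or wrong. The base case is justified by the assertion that every representing measure of a point of $G$ sits on the Shilov boundary. That is false: the paper (following \cite{Co}) only records that \emph{some} representing measure can be chosen with support in the Shilov boundary, and arbitrary representing measures need not live there (for the disc algebra, $\delta_{z_0}$ represents an interior point $z_0$ and is not carried by the circle). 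In the special case $A=A(G)$ with $G$ strictly pseudoconvex the $n=0$ statement happens to be vacuous because $\overline G=\textrm{Sp}(A)$, but for a general $A$ satisfying ($\dagger$) the $n=0$ case is already nontrivial content of the theorem, and ($\dagger$) is a hypothesis on the levels $n\ge 1$ only, so your induction has no floor to stand on.

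The step you yourself flag as the technical heart --- producing $e\in A_n=A_{n-1}^{**}$ whose Gelfand transform is $\chi_{\overline{H_n}}$ --- is precisely where the entire proof lives, and the route you sketch does not obviously close. The characteristic function of the open set $H_{n-1}$ does define an idempotent in $C(\textrm{Sp}(A_{n-1}))^{**}$, but there is no formal reason for it to lie in the (weak-$*$ closed) image of $A_{n-1}^{**}$; deciding whether it does is essentially equivalent to band-decomposition and abstract F.\ and M.\ Riesz statements about the measures attached to the Gleason part, which is the actual substance of the argument in \cite{KR}, not a consequence of the Gelfand--Najmark identification. Moreover, an $e\in A_n$ with \emph{continuous} Gelfand transform equal to $\chi_{\overline{H_n}}$ would force $\overline{H_n}$ to be clopen in $\textrm{Sp}(A_n)$, which you have not justified. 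Finally, observe that if such an $e$ existed, then testing $1=x_0(e)=\int\hat e\,d\mu_0$ against the probability measure $\mu_0$ settles the claim at level $n$ outright; your inductive hypothesis and the pushforward step are never used in the closing argument, which indicates that the induction is not doing the work you intend it to do.
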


Proof of the last theorem is first carried for the direct limit algebra (the $n=\infty$ case) and then "projected" by the natural mappings to the finite level algebras (with $n<\infty$). The detailed exposition of this construction can be found in \cite{KR}.

One could ask if for arbitrary uniform algebra the support of any representing measure of a point in $G$ lies in the Gelfand closure of a Gleason part $G$. The answer to this question is no. The work \cite{C} of B. J. Cole assures an existence of natural uniform algebra $A$ on $K$ with a property that each point of $\textrm{Sp}(A)$ is a one-point Gleason part and the Shilov boundary of $A$ is not the entire set $K$. Since any point of $\textrm{Sp}(A)$ has a representing measure with its support contained in the Shilov boundary of $A$ the general result doesn't hold. It is worth noting that the construction of Cole's algebra also relies on the direct limit technique.


\end{document}